\newtheorem{lemma}{Lemma}
\newtheorem{theorem}[lemma]{Theorem}
\newtheorem{remark}{Remark}
\newtheorem{corollary}[lemma]{Corollary}
\newtheorem{example}{Example}
\newcommand{\llla}{\left\lbrace}
\newcommand{\rlla}{\right\rbrace}
\newcommand{\R}{\mathbb{R}}
\newcommand{\0}{\mathcal{O}}
\begin{document}
\title{Transformation groups of certain flat affine manifolds}


\author{Saldarriaga, O.*, and Fl\'orez, A.*}

\subjclass[2010]{Primary: 57S20, 54H15; Secondary: 53C07, 17D25  \\ Partially Supported by CODI, Universidad de Antioquia. Project Number 2015-7654.}
\date{\today}

\begin{abstract} 
In this paper we characterize the group of affine transformations of a flat affine simply connected manifold whose developing map is a diffeomorphism. This is proved by making use of some simple facts about homeomorphisms of $\mathbb{R}^n$ preserving open connected sets. We show some examples where the characterization is useful. 
\end{abstract}
\maketitle

Keywords: Flat affine manifolds,  Flat affine Lie groups, Group of affine transformations, \'Etale affine representations, Left symmetric products.

\vskip5pt
\noindent
$*$ Instituto de Matem\'aticas, Universidad de Antioquia, Medell\'in-Colombia

e-mails: omar.saldarriaga@udea.edu.co, walexander.florez@udea.edu.co

\section{Introduction}
In what follows $M$ is a connected real n-dimensional manifold without boundary and  $\nabla$ a linear connection on $M$. The torsion  and curvature tensors of a connection $\nabla$ are defined as
\begin{align*} T_{\nabla}(X,Y)&=\nabla_XY-\nabla_YX-[X,Y] \\
K_{\nabla}(X,Y)&=[\nabla_X,\nabla_Y]  - \nabla_{[X,Y]}
\end{align*}
 for any $X,Y\in\mathfrak{X}(M)$, where $\mathfrak{X}(M)$ denotes the space of smooth vector fields  on $M$. If the curvature and torsion tensors of $\nabla$ are both null, the connection is called flat affine and the pair $(M,\nabla)$ is called a flat affine manifold. An affine transformation of $(M,\nabla)$ is a diffeomorphism $F$ of $M$  verifying the system of differential equations $F_*\nabla_XY=\nabla_{F_*X}F_*Y$, for all $X,Y\in \mathfrak{X}(M).$ The group of affine transformations $\it{Aff}(M,\nabla)$ is the set of diffeomorphisms  of $M$ preserving $\nabla$. This set endowed with the open-compact topology and composition is a Lie group (see \cite{KoNo} page 229). 
 
 The usual connection $\nabla^0$ in $\mathbb{R}^n$ is defined, in local coordinates, by $\nabla_X^0Y=\displaystyle{\sum_{i=1}^nX(f_i)\dfrac{\partial}{\partial x^i}}$ when $Y=\displaystyle{\sum_{i=1}^nf_i\dfrac{\partial}{\partial x^i}}.$ The group $\it{Aff}(\mathbb{R}^n,\nabla^0)$ agrees with the group of classical affine transformations $\it{Aff}(\mathbb{R}^n)$, that is, diffeomorphisms of $\mathbb{R}^n$  obtained as a composition of a linear transformatioans map defined by $w\mapsto T(w)+v$ (by linear transformation we mean a linear isomorphism).  We will also use the matrix notation $[\begin{matrix} A&v'\end{matrix}]$ with  $A$ the matrix of the linear transformation  and $v'$ the coordinate vector of the translation under some basis. 
  
An infinitesimal affine transformation of $(M,\nabla)$ is a smooth vector field $X$ on $M$ whose local 1-parameter groups $\phi_t$ are local affine transformations of $(M,\nabla)$.   We will denote by $\mathfrak{a}(M,\nabla)$ the real vector space of infinitesimal affine transformations of $(M,\nabla)$. An element $X$ of $\mathfrak{X}(M)$ belongs to $ \it{aff}(M,\nabla)$ if and only if it verifies
\[ \mathcal{L}_X\circ \nabla_Y -\nabla_Y\circ \mathcal{L}_X=\nabla_{[X,Y]},\quad\text{for all}\quad Y\in \mathfrak{X}(M),\]
where $\mathcal{L}_X$ denotes the Lie derivative. If the connection is flat affine, the previous equation gives that $X\in\mathfrak{a}(M,\nabla)$ if and only if
\begin{equation}\label{Eq:ecuacionkobayashi} \nabla_{\nabla_YZ}X=\nabla_Y\nabla_ZX,\quad\text{for all}\quad Y,Z\in \mathfrak{X}(M).  \end{equation}
The vector subspace $\it{aff}(M,\nabla)$ of $\mathfrak{a}(M,\nabla)$ whose elements are complete, with the usual bracket of vector fields,  is the Lie algebra of the group $\it{Aff}(M,\nabla)$  (see \cite{KoNo}). In this paper we use a simple fact about groups of homeomorphisms to determine complete infinitesimal affine transformations in some particular open sets of $\mathbb{R}^n$.\\
Let $p:\widehat{M}\rightarrow M$ denote the universal covering map of a real $n$-dimensional flat affine connected manifold $(M,\nabla)$. The pullback $\widehat{\nabla}$  of $\nabla$ by $p$ is a flat affine structure on $\widehat{M}$ and $p$ is an affine map. Moreover, the group $\pi_1(M)$ of deck transformations acts on $\widehat{M}$ by affine transformations.  There exists an affine immersion $D:(\widehat{M},\widehat{\nabla})\longrightarrow (\mathbb{R}^n,\nabla^0)$, called the developing map of $(M,\nabla)$, and a  group homomorphism  $A:\it{Aff}(\widehat{M},\widehat{\nabla}) \longrightarrow \it{Aff}(\mathbb{R}^n,\nabla^0)$ so that the following diagram commutes
\begin{equation}\label{Eq:diagramadeEhresman} \xymatrix{ \widehat{M} \ar[d]_{F} \ar[r]^{D} &\mathbb{R}^n\ar[d]^{A(F)}\\
\widehat{M} \ar[r]^{D} &\mathbb{R}^n.} \end{equation}
The map $D$ is called  the developing map and it was introduced by Ehresman (see \cite{E}). In particular for every $\gamma\in\pi_1(M)$, we have $D\circ \gamma= h(\gamma)\circ D$ with $h(\gamma):=A(\gamma)$. The map $h$ is also a group homomorphism called the holonomy representation of $(M,\nabla)$.   
	
If $M=G$ is an $n$-dimensional Lie group, a connection $\nabla$ is called left invariant if left multiplications, i.e., maps $L_g$   of $G$, with $g\in G$, defined by $L_g(h)=gh$, are affine transformations. Left invariant connections are usually denoted as $\nabla^+$. A Lie group $G$ endowed of a flat affine and left invariant connection $\nabla^+$ is called a flat affine Lie group. It is known that $(G,\nabla^+)$ is a flat affine Lie group if and only if there exists a homomorphism $\rho:\widehat{G}\rightarrow\it{Aff}(\mathbb{R}^n)$ whose corresponding action leaves an open orbit with discrete isotropy, see \cite{Kos} and \cite{Med}. Such representations are called affine \'etale representations. The open orbit turns out to be the image of the developing map $D:\widehat{G}\rightarrow \mathbb{R}^n$ and $D$ is a covering map of $\mathcal{O}$. Having a left invariant linear connection $\nabla^+$ on a Lie group $G$ is equivalent to have a bilinear product on $\mathfrak{g}=$Lie$(G)$ given by $X\cdot Y=(\nabla^+_{X^+}Y^+)_\epsilon$, where $\epsilon$ denotes the neutral element of $G$ and $X^+,Y^+$ are the left invariant vector fields on $G$ determined respectively by $X$ and $Y$. When the bilinear product is given, the connection is defined by $\nabla_{X^+}Y^+=(X\cdot Y)^+$ forcing the conditions $\nabla_{fX^+}Y^+=f\nabla_{X^+}Y^+$ and $\nabla_{X^+}fY^+=X^+(f)Y^+f\nabla_{X^+}Y^+$. That $\nabla^+$ is torsion free is equivalent to have 
\begin{equation}\label{Eq:torsionfree}
[X,Y]=X\cdot Y-Y\cdot X
\end{equation} and the connection is flat if and only if $[X,Y]\cdot Z=X\cdot(Y\cdot Z)-Y\cdot (X\cdot Z)$. Finally, the connection $\nabla^+$  is flat affine  if and only if the bilinear product is left symmetric, that is, 
\[ (X\cdot Y)\cdot Z-(Y\cdot X\cdot) Z=X\cdot(Y\cdot Z)-Y\cdot (X\cdot Z). \]
In this case, the algebra $(\mathfrak{g},\cdot)$ is called a left symmetric a algebra (see \cite{Med}). It is also known that the Lie algebra $\mathfrak{g}=$Lie$(G)$ is endowed of an associative product compatible with the Lie bracket if and only if the left invariant connection $\nabla^+$ is bi-invariant. That is, right multiplication on $G$, i.e., maps defined by $R_g(h)=hg$, for any $g$ and $h$ in $G$, are also affine transformations of $G$ relative to $\nabla^+$ (see \cite{Med}).

\section{Transformations preserving connected open sets}

In this section we show that, for connected open sets of $\mathbb{R}^n$, a classical affine transformation preserves the open set if and only if it preserves its boundary. We present some examples where the result is useful.

\begin{lemma} If $\0$ is an open set of $\mathbb{R}^n$ with boundary $\partial\0$ then $$\{T\in \it{Aff}(\mathbb{R}^n)\mid T(\0)=\0 \}\subseteq \{T\in \it{Aff}(\mathbb{R}^n)\mid T(\partial \0)=\partial\0 \}.$$
\end{lemma}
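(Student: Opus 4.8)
The plan is to exploit the fact that a classical affine transformation $T=[\begin{matrix}A&v'\end{matrix}]$ of $\mathbb{R}^n$ is in particular a homeomorphism of $\mathbb{R}^n$ onto itself (its inverse $[\begin{matrix}A^{-1}&-A^{-1}v'\end{matrix}]$ is again affine, hence continuous), together with the elementary topological fact that any homeomorphism commutes with the closure and interior operators, and therefore with the boundary operator.

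First I would recall that for an arbitrary homeomorphism $T$ of a topological space and an arbitrary subset $S$ one has $T(\overline{S})=\overline{T(S)}$ and $T(\operatorname{int} S)=\operatorname{int}(T(S))$; both are immediate from the fact that $T$ maps open sets to open sets and closed sets to closed sets, and likewise for $T^{-1}$. Consequently $T(\partial S)=T\big(\overline{S}\setminus\operatorname{int} S\big)=\overline{T(S)}\setminus\operatorname{int}(T(S))=\partial\big(T(S)\big)$, using that $T$ is a bijection so that it respects set differences.

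Next I would apply this with $S=\0$ and a transformation $T\in \it{Aff}(\mathbb{R}^n)$ satisfying $T(\0)=\0$: the identity above gives $T(\partial\0)=\partial\big(T(\0)\big)=\partial\0$, which is exactly the assertion that $T$ belongs to the right-hand set. This establishes the claimed inclusion. (In fact, since $T^{-1}$ is also an affine transformation fixing $\0$, the same argument applied to $T^{-1}$ shows the two conditions are equivalent, but only the stated inclusion is needed here.)

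There is essentially no obstacle: the only thing to be careful about is making sure the displayed chain of equalities uses that $T$ is a bijection (so that image commutes with complement/difference) and a homeomorphism (so that image commutes with closure and interior); both hold because affine transformations are invertible with affine, hence continuous, inverse. No use of the connectedness or openness of $\0$ is required for this lemma, although those hypotheses will matter in the subsequent results.
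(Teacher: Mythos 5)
Your proof is correct. It is a mild repackaging of the paper's argument rather than a genuinely different one: both proofs rest entirely on the fact that $T$ and $T^{-1}$ are continuous bijections. The difference is in the mechanics. The paper argues pointwise: for $x\in\partial\mathcal{O}$ it takes a sequence in $\mathcal{O}$ converging to $x$ to get $T(x)\in\overline{\mathcal{O}}$, rules out $T(x)\in\mathcal{O}$ using $T^{-1}(\mathcal{O})=\mathcal{O}$ (this step implicitly uses that $\mathcal{O}$ is open, so that $\partial\mathcal{O}=\overline{\mathcal{O}}\setminus\mathcal{O}$), and then obtains the reverse inclusion by running the same argument for $T^{-1}$. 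You instead invoke the general identity $T(\partial S)=\partial\bigl(T(S)\bigr)$, valid for any homeomorphism of $\mathbb{R}^n$ and any subset $S$, which gives the equality $T(\partial\mathcal{O})=\partial\mathcal{O}$ in one step and makes explicit that neither openness nor connectedness of $\mathcal{O}$ is needed. What your version buys is brevity and the observation (consistent with the paper's own remark after Lemma 2) that the statement is purely about homeomorphisms; what the paper's version buys is self-containedness, avoiding the need to justify that image commutes with closure, interior, and set difference.
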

\begin{proof} First notice that $\mathcal{A}=\{T\in \it{Aff}(\mathbb{R}^n)\mid T(\0)=\0 \}$ and $ \{T\in \it{Aff}(\mathbb{R}^n)\mid T(\partial \0)=\partial\0 \}$ are groups. Now, take  $T\in \it{Aff}(\mathbb{R}^n)$ so that $T(\0)=\0$ and let  $ x\in\partial\0 $. Choose a convergent sequence $(x_n)\subseteq \0$ with $ x=\displaystyle{\lim_{n\rightarrow\infty}} x_n $. Hence $ T(x)=\displaystyle{\lim_{n\rightarrow\infty}} T(x_n) $ and therefore $ T(x)\in \overline{\0}$. However    $ T(x)\notin\0 $, otherwise since $T^{-1}\in\mathcal{A}$, we would have that  $x=T^{-1}\circ T(x)\in\0$. It follows that $T(\partial\0)\subseteq\partial\0$. As $T^{-1}\in\mathcal{A}$, the previous argument shows that $T^{-1}(\partial\0)\subseteq\partial\0$ and by applying $T$, the other inclusion follows.\end{proof}

\begin{lemma} \label{T:preservarorbitaigualapreservarfrontera} If  $ \0 $ is a connected open set of $\mathbb{R}^n$,  then we have
	$$ \llla T\in \it{Aff}(\R^n) \mid T(\0) =\0\rlla=\llla  T\in \it{Aff}(\R^n) \mid T(\partial\0)=\partial \0 , T(p)\in\0 \text{ for some}\ p\in \0 \rlla .$$
\end{lemma}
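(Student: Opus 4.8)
The plan is to prove the two inclusions separately, the interesting one being ``$\supseteq$''. For ``$\subseteq$'' there is essentially nothing to do: if $T\in\it{Aff}(\R^n)$ satisfies $T(\0)=\0$, then the previous lemma already gives $T(\partial\0)=\partial\0$, while $T(p)\in\0$ holds for \emph{every} $p\in\0$, so in particular for some $p$. I also note that affineness of $T$ is never used below; only that $T$ is a homeomorphism of $\R^n$.

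For ``$\supseteq$'', the first step I would carry out is the structural observation that $\0$ is exactly one of the connected components of the open set $U:=\R^n\setminus\partial\0$. Since $\0$ is open it is disjoint from $\partial\0$, hence $\0\subseteq U$ and $\0$ is open in $U$; and since $\overline{\0}=\0\cup\partial\0$, the closure of $\0$ in the subspace $U$ equals $\overline{\0}\cap U=\0$, so $\0$ is also closed in $U$. A nonempty subset of a space that is simultaneously open, closed and connected is a connected component, so $\0$ is a connected component of $U$. (Here $\0\neq\emptyset$, which is forced by the hypothesis that there is a point $p\in\0$.)

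The rest is then quick. Let $T\in\it{Aff}(\R^n)$ satisfy $T(\partial\0)=\partial\0$ and $T(p)\in\0$ for some $p\in\0$. As a homeomorphism of $\R^n$ preserving $\partial\0$, the map $T$ sends $U=\R^n\setminus\partial\0$ homeomorphically onto itself, hence permutes the connected components of $U$; in particular $T(\0)$ is again a connected component of $U$. Since $T(p)$ lies in both $\0$ and $T(\0)$, these two components of $U$ coincide, i.e. $T(\0)=\0$, which is what we want.

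The one place where I expect to have to be a little careful is the identification of $\0$ with a \emph{full} connected component of $\R^n\setminus\partial\0$ --- concretely, the verification that $\overline{\0}\cap(\R^n\setminus\partial\0)=\0$, so that $\0$ is relatively clopen there. Everything else is routine: the elementary fact that a nonempty clopen connected set is a connected component, and the standard fact that homeomorphisms map connected components onto connected components.
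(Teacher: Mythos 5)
Your proof is correct, but it takes a genuinely different route from the paper's. The paper proves the inclusion ``$\supseteq$'' by exploiting path-connectedness: it joins an arbitrary $q\in\0$ to $p$ by a path inside $\0$, argues that if $T(q)\notin\0$ the image path would have to meet $\partial\0$ at some $T(q')$, and derives a contradiction from $T^{-1}(\partial\0)=\partial\0$; it then repeats the whole argument with $T^{-1}$ in place of $T$ to get the reverse containment $\0\subseteq T(\0)$. You instead make the structural observation that $\0$ is a full connected component of $U=\R^n\setminus\partial\0$ (via the relative clopenness computation $\overline{\0}\cap U=\0$), after which everything follows from the fact that the homeomorphism $T|_U$ permutes the components of $U$ and that two components sharing the point $T(p)$ must coincide. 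Your route buys three things: it needs only connectedness rather than path-connectedness, it delivers the equality $T(\0)=\0$ in a single step instead of two separate containments, and it makes fully rigorous the ``first boundary crossing'' step that the paper states somewhat informally (the existence of $q'$ with $T(q')\in\partial\0$ really requires a small supremum argument). Both proofs use only that $T$ is a homeomorphism, consistent with the paper's subsequent remark that the lemmas hold for homeomorphisms of $\R^n$. The only caveat, which affects the statement rather than your argument, is the degenerate case $\0=\emptyset$, where the right-hand side is empty while the left-hand side is all of $\it{Aff}(\R^n)$; like the paper, you implicitly assume $\0\neq\emptyset$, and that is harmless here.
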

\begin{proof} The first inclusion follows from the previous lemma. For the other inclusion, let $ T\in\mathcal{B}=\llla  T\in \it{Aff}(\R^n) \mid T(\partial\0)=\partial \0 , T(p)\in\0 \text{ for some}\ p\in \0 \rlla $ and let $ q\in\0 $. Since $\0$ is connected, it is path connected, so there exists a  continuous path $s$ from $ q $ to $p$ totally contained in $\0$. Hence $ T\circ s $ is a continuous path  from $ T(q) $ to $ T(p) $ and we claim that $ T(q)\in\0 $. Otherwise, as $ T(p)\in\0 $ there should exists a point $ q' $ on the path $ s $ so that $ T(q')\in\partial\0 $. But since $T^{-1}$ preserves $\partial\0$, we would have that $ q'\in\partial\0 $  contradicting the choice of $s$. This proves that $T(\0)\subseteq \0$.  
	
A similar argument considering a continuous path  from $q$ to $T(p)$, shows that $T^{-1}(q)\in\0$ whenever $q\in\0$. That is, $T^{-1}(\0)\subseteq\0$. By applying $T$, we get that $\0\subseteq T(\0)$ \end{proof}
	
\begin{remark} The previous lemmas hold true for homeomorphisms of $\mathbb{R}^n$.
\end{remark}

Now we present two examples. First recall that an affine frame on the affine space $\mathbb{R}^n$ is a set of $n+1$ points $p_0,p_1,\cdots,p_n$ so that the vectors $\overrightarrow{p_0p_1},\dots,\overrightarrow{p_0p_n}$ form a linear basis of $\mathbb{R}^n$ seen as a vector space.

\begin{example} Let $p_0,p_1,\cdots,p_n$ be an affine frame on the $n$-dimensional affine space $\mathbb{R}^n$ and consider the manifold   $M_i=\mathbb{R}^n\setminus\{p_0,\dots,p_{i-1}\}$ endowed with the connection $\nabla^i$  given by the restriction to $M_i$ of the usual connection  $\nabla^0$. Also set $v_k=\overrightarrow{p_0p_k}$, for $k=0,\dots n$ and consider the basis $\beta=\{v_1,\dots,v_n\}$ of $\mathbb{R}^n$.
	
By the previous lemma, the group $\it{Aff}(M_1,\nabla^1)$ is given by affine transformations of $\mathbb{R}^n$ fixing the point $p_0$. Hence, seen as transformations of $\mathbb{R}^n$, they are  determined by the linear part. That is, $\it{Aff}(M_1,\nabla^1)$  is locally isomorphic to $GL(\mathbb{R}^n)$.
	
It is easy to self convince that, in the space of affine transformations fixing a set of two points $\{A,B\}$, there is no continuous path from one transformation fixing both points to a transformation permuting them. From this observation and the previous lemma, the connected component $\it{Aff}(M_{i+1},\nabla^{i+1})_0$   containing the unit of the group $\it{Aff}(M_{i+1},\nabla^{i+1})$ is locally isomorphic to the group of affine transformations of $\mathbb{R}^n$ fixing the points $p_0,\dots,p_i$ and whose linear part is orientation preserving.  Since every element of  $\it{Aff}(M_{i+1},\nabla^{i+1})_0$  fixes $p_0$, every $T\in \it{Aff}(M_{i+1},\nabla^{i+1})_0$ is determined by its  linear part $L$ and this linear transformation fixes the vectors $v_1,\dots,v_i$. Thus, the matrix of $L$ with respect to the basis $\beta$ is of the form
	\[ [L]_\beta=\left[ \begin{matrix}  I&B\\0&A  \end{matrix} \right] \] 
with $I$ the identity matrix of size $i\times i$. Hence, for $i\leq n$, the group $\it{Aff}(M_{i+1},\nabla^{i+1})_0$ is locally isomorphic to the matrix Lie group $\left\{ \left[ \begin{matrix}  I&B\\0&A  \end{matrix} \right]\ \bigg|\ \det A>0  \right\}.$ This group is also locally isomorphic to the matrix Lie group $\left\{ \left[ \begin{matrix}  A&B\\0&I  \end{matrix} \right]\ \bigg|\ \det A>0  \right\}$.

\end{example}

\begin{remark} \label{R:planewithiholes} The group $\it{Aff}(M_{i+1},\nabla^{i+1})_0$ of the previous example is locally isomorphic to the group $(\mathbb{R}^{n-i})^i\rtimes_\theta GL(\mathbb{R}^{n-i})$ where $\theta(A)(w_1,\dots,w_i)=(Aw_1,\dots,Aw_i)$. In particular, the group $\it{Aff}(M_{2},\nabla^{2})$ is locally isomorphic to $\it{Aff}(\mathbb{R}^{n-1})$. Also notice that the group $\it{Aff}(M_{n+1},\nabla^{n+1})_0$ is trivial as the group $\it{Aff}(M_{n+1},\nabla^{n+1})$ is discrete with elements determined by affine transformations of $\mathbb{R}^n$ permuting the set of points $p_0,\dots,p_n$. This follows directly from the fact that an affine transformation is uniquely determined by its values on a frame (see \cite{Ber}), just as a linear transformation is uniquely determined by its values in a  basis. 
	
As a consequence, for $i=0,\dots n$, the Lie algebra of the group $\it{Aff}(M_{i+1},\nabla^{i+1})$ is isomorphic to the  algebra	of matrices of the form $\left[ \begin{matrix}  A&B\\0&0  \end{matrix} \right]$, with $A$ and $B$ of sizes $(n-i)\times (n-i)$ and $i\times (n-i)$, respectively. Finally, notice that this algebra is endowed with an associative product compatible with its Lie bracket, namely, the usual product of matrices.
\end{remark}

\begin{example} \label{Ex:hyperplanes} Consider the open set of $ \R^n $ given by $\0=\llla (x_1,x_2, \dots , x_n) \mid x_1>0 \rlla $. According to Lemma \ref{T:preservarorbitaigualapreservarfrontera}, to determine the elements in $ \it{Aff}(\R^n) $ preserving $\0$, it is enough to find all affine transformations fixing its boundary, i.e.,  $ T(\partial\0)=\partial\0 $, hence we need  conditions so that the following equation holds true
\[ 	\left[ \begin{matrix}
	a_{11} & \cdots & a_{1n} & a_{1,n+1} \\ 
	\vdots & \ddots & \vdots & \vdots \\
	a_{n1} & \cdots & a_{nn} & a_{n+1,n+1}
	\end{matrix} \right]
	\left[ \begin{matrix}
	0\\x_2\\\vdots\\x_n
	\end{matrix}\right]
	=\left[ \begin{matrix}
	0\\x'_2\\\vdots\\x'_n
	\end{matrix}\right]. \]
	A simple computation gives that $ a_{1j}=0 $ para $ 1<j\leq n+1 $. To satisfy the condition that $T(p)\in\0$ for some $p\in\0$ we also need that $ a_{11}>0 $. Therefore, the group of transformations preserving  $ \0 $ is locally isomorphic to the group of affine transformation of the form
	\[
	\llla A=\left[ \begin{matrix}
	a_{11} &   0    & \cdots & 0&     0      \\
	a_{21} & a_{22} & \cdots& a_{2n} &  a_{2,n+1}  \\
	\vdots & \vdots& \ddots & \vdots &   \vdots    \\
	a_{n1} & a_{n2} & \cdots & a_{nn} & a_{n,n+1}
	\end{matrix} \right] \bigg|\ A\text{ invertible and } a_{11}>0\rlla
	\]
	where the first $n$ columns give the linear part and the last column is the translation part. \\
	More generally, if $\0_i=\llla (x_1,x_2, \dots , x_n) \mid x_1>0,\ x_2>0, \dots,\ x_i>0 \rlla $  and $\nabla^i$ the connection $\nabla^0$ in $\0_i$, with $ 1\leq i \leq n $, the connected component $\it{Aff}(\0_i,\nabla^i)_0$ of the group of transformations $\it{Aff}(\0_i,\nabla^i)$ preserving this orbit $ \0_i $ is given by affine transformations of the form
	\begin{equation} \label{Eq:groupofaffinetransformationsoftheintersectionofhalfplanes}
	\llla B=\left[ \begin{smallmatrix}
	a_{11} &    \cdots &  0 & 0 & \cdots & 0 & 0   \\
	\vdots &  \ddots & \vdots & \vdots & \ddots &\vdots & \vdots\\
	0    & \cdots &    a_{ii} & 0 & \cdots & 0 & 0\\
	a_{i+1,1} & \cdots & a_{i+1,i}  & a_{i+1,i+1} & \cdots & a_{i+1,n}  & a_{i+1,n+1} \\
	\vdots & \ddots & \vdots & \vdots & \ddots & \vdots& \vdots\\
	a_{n1}  &\cdots & a_{ni} & a_{n,i+1} & \cdots & a_{n,n} & a_{n,n+1}
	\end{smallmatrix} \right]\bigg|\ B\text{ invertible and } a_{11}, \dots, a_{ii}>0 \rlla
	\end{equation}
where  the first $n$ columns denote the linear part and the last column the translation part.  
\end{example}

For  orbits as in the previous example we have.

\begin{lemma} \label{l:orbitbyhyperplanes} If $\0_i$ and $\nabla^i$ are as in the previous example, the group $\it{Aff}(\0_i,\nabla^i)$   is locally isomorphic to
	\begin{equation}\label{Eq:group reduced} \it{Aff}(\0_i,\nabla^i)_0=\left(( (\mathbb{R}^{n-i})^i\rtimes_{\theta_1}\it{Aff}(\mathbb{R}^{n-i}) )^{op}\rtimes _{\theta_2}(\mathbb{R}^{>0})^i\right)^{op}  \end{equation}
where the superindex $op$ denotes the opposite Lie group	and   $\theta_1$ and $\theta_2$ are, respectively, the action of $\it{Aff}(\mathbb{R}^{n-i})$ on $(\mathbb{R}^{n-i})^i$ and the action of $(\mathbb{R}^{>0})^i$ on $(\mathbb{R}^{n-i})^i\rtimes_{\theta_1}\it{Aff}(\mathbb{R}^{n-i})$ defined by 
	\begin{align*}\theta_1&(A,w)(v_1,\dots,v_i)=(Av_1,\dots,Av_i)\quad\text{and}\\ 
	\theta_2&(\lambda_1,\dots,\lambda_i)(v_1,\dots,v_i,A,w)=(\lambda_1 v_1,\dots,\lambda_iv_i,A,w) \end{align*}
Morevoer, its Lie algebra is associative
\end{lemma}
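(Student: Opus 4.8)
The plan is to transport the group law of the matrix model of $\it{Aff}(\0_i,\nabla^i)_0$ obtained in Example \ref{Ex:hyperplanes} onto the iterated semidirect product in the statement. Splitting $\{1,\dots,n\}$ as $\{1,\dots,i\}\sqcup\{i+1,\dots,n\}$, an element $B$ of the group \eqref{Eq:groupofaffinetransformationsoftheintersectionofhalfplanes} is determined by a quadruple $(C,A,w,\lambda)$: here $\lambda=(a_{11},\dots,a_{ii})\in(\mathbb{R}^{>0})^i$, $A=(a_{jl})_{i<j,l\le n}$ is an invertible $(n-i)\times(n-i)$ matrix, $w=(a_{j,n+1})_{i<j\le n}\in\mathbb{R}^{n-i}$, and $C=(a_{jk})_{i<j\le n,\,1\le k\le i}$ is an $(n-i)\times i$ matrix, the other entries of $B$ being forced to vanish by the computation of Example \ref{Ex:hyperplanes}. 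Composition of affine transformations is matrix multiplication of the associated $(n+1)\times(n+1)$ augmented matrices, so a block computation gives that the quadruple of $F_1\circ F_2$ is
\[\big(C_1\operatorname{diag}(\lambda_2)+A_1C_2,\ A_1A_2,\ A_1w_2+w_1,\ \lambda_1\lambda_2\big),\]
where $\lambda_1\lambda_2$ is the coordinatewise product and $\operatorname{diag}(\lambda_2)$ is the $i\times i$ diagonal matrix with diagonal $\lambda_2$.

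Next I would isolate the building blocks inside this model. On the subgroup $\{\lambda=(1,\dots,1)\}$ the law reduces to $(C_1,A_1,w_1)(C_2,A_2,w_2)=(C_1+A_1C_2,A_1A_2,A_1w_2+w_1)$, which is exactly the law of $N:=(\mathbb{R}^{n-i})^i\rtimes_{\theta_1}\it{Aff}(\mathbb{R}^{n-i})$ once one identifies $C$ with the tuple $(v_1,\dots,v_i)$ of its columns, since $\theta_1(A,w)(v_1,\dots,v_i)=(Av_1,\dots,Av_i)$ is the column action of the linear part. The subgroup $\{C=0,\,A=I,\,w=0\}$ is $T:=(\mathbb{R}^{>0})^i$, abelian, and under the identification above $\theta_2(\lambda)$ rescales the $k$-th column of $C$ by $\lambda_k$ and fixes $(A,w)$; a direct check shows $\theta_2(\lambda)\in\operatorname{Aut}(N)=\operatorname{Aut}(N^{op})$ and $\theta_2\colon T\to\operatorname{Aut}(N^{op})$ is a homomorphism, so $N^{op}\rtimes_{\theta_2}T$ is a bona fide group. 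Unwinding the definitions — the product in $N^{op}\rtimes_{\theta_2}T$ is $(n_1,t_1)(n_2,t_2)=(\theta_2(t_1)(n_2)\cdot_N n_1,\ t_1t_2)$ because the $N^{op}$-product reverses $\cdot_N$, and then reversing once more for the outer $(\,\cdot\,)^{op}$ while using that $T$ is abelian — one finds that the product of $(N^{op}\rtimes_{\theta_2}T)^{op}$ is
\[(n_1,\lambda_1)(n_2,\lambda_2)=\big(\theta_2(\lambda_2)(n_1)\cdot_N n_2,\ \lambda_1\lambda_2\big).\]
Rewriting $n_j$ as $(C_j,A_j,w_j)$ and recalling that $\cdot_N$ is the law displayed just above and that $\theta_2(\lambda_2)$ rescales the columns of $C_1$, this is precisely the composition law of the first paragraph. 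Hence $(C,A,w,\lambda)\mapsto\big((v_1,\dots,v_i,A,w),\lambda\big)$ is a group isomorphism from $\it{Aff}(\0_i,\nabla^i)_0$ onto $(N^{op}\rtimes_{\theta_2}T)^{op}$, and since $\it{Aff}(\0_i,\nabla^i)$ has the same identity component it is locally isomorphic to this group.

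For the last assertion I would differentiate the matrix model at the identity: the Lie algebra of $\it{Aff}(\0_i,\nabla^i)_0$ is the space of $(n+1)\times(n+1)$ matrices $\left[\begin{smallmatrix} D&0&0\\ C&A&w\\ 0&0&0\end{smallmatrix}\right]$ with $D$ an $i\times i$ \emph{diagonal} matrix, $A$ an arbitrary $(n-i)\times(n-i)$ matrix, $C$ an arbitrary $(n-i)\times i$ matrix and $w\in\mathbb{R}^{n-i}$ (the upper-left block is diagonal because it already is in the group). A one-line block multiplication shows the ordinary matrix product of two such matrices has the same shape — the relevant point being that the product of two diagonal matrices is diagonal — so this vector space is an associative algebra under matrix multiplication whose commutator is its Lie bracket; that is, the Lie algebra of $\it{Aff}(\0_i,\nabla^i)$ is associative.

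The one genuinely delicate point is the bookkeeping of the two ``op'' operations: one must check carefully that reversing $N$, forming the semidirect product with $T$ through $\theta_2$, and then reversing the whole product reproduces exactly the law $(n_1,\lambda_1)(n_2,\lambda_2)=(\theta_2(\lambda_2)(n_1)\cdot_N n_2,\ \lambda_1\lambda_2)$ coming from matrix composition — in particular that it is $\theta_2$ evaluated at the \emph{second} factor's $\lambda$ that occurs, and that the commutativity of $T$ is what makes the two orderings of the $T$-component agree. The block composition law, the identification of $N$ and $T$, and the associativity of the Lie algebra are all routine computations.
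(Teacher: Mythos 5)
Your proposal is correct and follows exactly the paper's route: the paper's proof consists of asserting that the map $\left[\begin{smallmatrix} D&0&0\\A&B&w\end{smallmatrix}\right]\mapsto((A,B,w),D)$ (your $(C,A,w,\lambda)\mapsto((v_1,\dots,v_i,A,w),\lambda)$ in slightly different notation) is "an easy calculation"--verified isomorphism, and then identifies the Lie algebra with the matrices $\left[\begin{smallmatrix} D&0&0\\ C&A&w\\ 0&0&0\end{smallmatrix}\right]$ ($D$ diagonal), closed under matrix multiplication. You have simply carried out, correctly, the block computation and the double-opposite bookkeeping that the paper leaves to the reader.
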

\begin{proof} An easy calculation shows that the map between the groups of Equations  \eqref{Eq:groupofaffinetransformationsoftheintersectionofhalfplanes} and \eqref{Eq:group reduced} defined by $\left[\begin{matrix} D&0&0\\A&B&w\end{matrix} \right]\mapsto ((A,B,w),D)$ is an isomorphism, where the matrix on the left denotes an element on the group in Equation \eqref{Eq:groupofaffinetransformationsoftheintersectionofhalfplanes} with $B$ invertible and $D$ a diagonal matrix of positive determinant. 
	
The Lie algebra  $\it{aff}(\0_i,\nabla^i)$ is the  algebra of all matrices  of the form $\left[\begin{matrix} D&0&0\\A&B&w\\0&0&0\end{matrix} \right]$, with $D$ diagonal. Hence it is an associative algebra with the usual product of matrices. 
\end{proof}

\section{Applications and examples}

Given a manifold $M$ endowed of a linear connection, for any $p\in M$, there exists a neighborhood $N_p$ of $0\in T_pM$ so that for any $X_p\in N_p$, the geodesic $\gamma_t$ with initial condition $(p,X_p)$ is defined in an open interval $(-\epsilon,\epsilon)$ with $\epsilon>1$. The map
\[ \begin{matrix} exp_p^\nabla:&T_pM&\rightarrow& M\\ &X_p&\mapsto&\gamma_1\end{matrix} \]
is called the exponential map relative to $\nabla$. 

As an application of the results in the previous section, we get the following.

\begin{theorem} \label{T:mainresult} Let $ (M,\nabla) $ be a flat affine simply connected manifold. If the domain of the exponential map relative to $\nabla$ at some $p\in M$ is a convex subset of $T_pM$, then the group of affine transformations $ \it{Aff}(M,\nabla) $ is locally isomorphic to $ \llla T\in \it{Aff}(\R^n) \mid T(\0) =\0\rlla $, with $\0$ the image under the developing map. It is also locally isomorphic to  $$\llla  T\in \it{Aff}(\R^n) \mid T(\partial\0)=\partial \0 ,\ T(p)\in\0\ \text{for some}\ p\in \0 \rlla. $$
More generally, the conclusion is true when the developing map $D:M\rightarrow \0$ is a diffeomorphism onto $\0$.
\end{theorem}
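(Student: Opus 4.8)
The plan is to reduce the general statement to a transfer of the group isomorphism across the developing map, using the Ehresmann commutative diagram \eqref{Eq:diagramadeEhresman}. First I would handle the special case where the domain of $\exp_p^\nabla$ is convex: the exponential map $\exp_p^\nabla: U \subseteq T_pM \to M$ is, for a flat affine connection on a simply connected manifold, a local diffeomorphism, and in fact a development — composing with a linear identification $T_pM \cong \mathbb{R}^n$, the map $\exp_p^\nabla$ plays the role of (the inverse of) the developing map, with $\mathcal{O} = D(M)$ the image. The point of the convexity hypothesis is exactly to guarantee that $D: M \to \mathcal{O}$ is a diffeomorphism onto its image rather than merely a covering: since $D$ restricted to geodesics through $p$ is injective and the domain is convex (so any two points of $M$ are joined by a geodesic whose development is a straight segment in $\mathcal{O}$), $D$ is injective; surjectivity onto $\mathcal{O}$ is by definition of $\mathcal{O}$. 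This is why the last sentence of the theorem — assuming $D$ is a diffeomorphism outright — is the genuinely general statement, and I would prove that version, noting the convex case as a corollary.

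So assume $D: M \to \mathcal{O}$ is a diffeomorphism. The key step is to show that the homomorphism $A: \it{Aff}(M,\nabla) \to \it{Aff}(\mathbb{R}^n,\nabla^0)$ of \eqref{Eq:diagramadeEhresman} restricts to a group isomorphism onto $\{T \in \it{Aff}(\mathbb{R}^n) \mid T(\mathcal{O}) = \mathcal{O}\}$. Since $M$ is simply connected, $\widehat{M} = M$, $\widehat{\nabla} = \nabla$, and there are no deck transformations, so the diagram \eqref{Eq:diagramadeEhresman} reads $A(F) \circ D = D \circ F$ for all $F \in \it{Aff}(M,\nabla)$. Because $D$ is a bijection onto $\mathcal{O}$, this equation says $A(F) = D \circ F \circ D^{-1}$ on $\mathcal{O}$; in particular $A(F)$ maps $\mathcal{O}$ bijectively to $\mathcal{O}$, so $A$ does land in the subgroup $\{T : T(\mathcal{O}) = \mathcal{O}\}$. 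Injectivity of $A$ on $\it{Aff}(M,\nabla)$ follows from the same formula: if $A(F) = \mathrm{id}$ then $D \circ F = D$, hence $F = \mathrm{id}$ since $D$ is injective. For surjectivity, given $T \in \it{Aff}(\mathbb{R}^n)$ with $T(\mathcal{O}) = \mathcal{O}$, define $F := D^{-1} \circ T|_{\mathcal{O}} \circ D : M \to M$; this is a diffeomorphism, and because $D$ and $T$ are both affine (hence $\nabla^0$-preserving) and $D$ is an affine diffeomorphism onto $(\mathcal{O}, \nabla^0|_{\mathcal{O}})$, the map $F$ preserves $\nabla$, so $F \in \it{Aff}(M,\nabla)$ and $A(F) = T$ by construction. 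Combined with Lemma \ref{T:preservarorbitaigualapreservarfrontera} this also gives the second description of the group in terms of the boundary $\partial\mathcal{O}$.

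Finally I would upgrade this abstract group isomorphism to a \emph{local} isomorphism of Lie groups, which is all the theorem claims. The cleanest route is to observe that $A$ is not merely a homomorphism but continuous for the compact-open topologies (conjugation by the fixed homeomorphism $D$ is continuous), and its inverse $T \mapsto D^{-1} \circ T \circ D$ is continuous as well, so $A$ is in fact an isomorphism of topological groups, hence of Lie groups; a fortiori it is a local isomorphism. Alternatively, and perhaps more in the spirit of the paper, one differentiates: $A$ induces a Lie algebra map $\it{aff}(M,\nabla) \to \it{aff}(\mathbb{R}^n,\nabla^0)$ sending a complete infinitesimal affine transformation $X$ to the vector field $D_* X$ on $\mathcal{O}$, which extends to a complete affine field on $\mathbb{R}^n$ tangent to the dynamics preserving $\mathcal{O}$; the convexity/diffeomorphism hypothesis ensures completeness is preserved in both directions, giving an isomorphism of Lie algebras and hence a local isomorphism of Lie groups. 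I expect the main obstacle to be precisely this completeness bookkeeping — verifying that $F \mapsto A(F)$ and its inverse send complete vector fields (equivalently, one-parameter subgroups) to complete ones, which is where the hypothesis on the domain of $\exp_p^\nabla$, or the global diffeomorphism assumption on $D$, is actually used rather than just the local affine structure.
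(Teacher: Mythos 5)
Your proposal is correct and follows essentially the same route as the paper: the paper's proof consists of citing Koszul (and Shima) for the fact that simple connectedness plus convexity of the domain of $\exp_p^\nabla$ makes $D$ an affine diffeomorphism onto $\0$, and then invoking the commutativity of diagram \eqref{Eq:diagramadeEhresman}, which is exactly the conjugation argument $A(F)=D\circ F\circ D^{-1}$ that you spell out in detail. The only difference is one of emphasis: you sketch the convexity-implies-injectivity step and carry out the transfer explicitly (including the topological-group upgrade), whereas the paper delegates the former to the literature and states the latter in one line.
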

\begin{proof} As $M$ is simply connected and the domain of the exponential is convex, the developing map $D:(M,\nabla)\rightarrow (\0,\nabla^0)$ is an affine diffeomorphism (see \cite{Kos}, see also \cite{Shi}, page 151), hence  the result follows from  the commutativity of the diagram \eqref{Eq:diagramadeEhresman}.\end{proof}
	
\begin{example} \label{Ex:connectionsontheplane} In $\mathbb{R}^2$ there are, up to isomorphism, six flat affine connections whose group of affine transformations act transitively on $\mathbb{R}^2$ (see \cite{Ben}).  The connections are left invariant determined by the following affine \'etale representations
	\[ \begin{array}{lll}
	\rho_1(a,b)=\begin{bmatrix}
	1 & 0 & a \\
	0 & 1 & b \\
	\end{bmatrix},\quad & \rho_2(a,b)=\begin{bmatrix}
	1 & b & a+\frac{1}{2}b^2 \\
	0 & 1 & b \\
	\end{bmatrix} ,\quad &
	\rho_3(a,b)=\begin{bmatrix}
	1 & 0 & a \\
	0 & e^b & 0 \\
	\end{bmatrix},\\\\ \rho_4(a,b)=\begin{bmatrix}
	e^a & be^a & 0 \\
	0 & e^a & 0 \\
	\end{bmatrix},\quad &
	\rho_5(a,b)=\begin{bmatrix}
	e^a & 0 & 0 \\
	0 & e^b & 0 \\
	\end{bmatrix},\quad & \rho_6(a,b)=e^a\begin{bmatrix}
	\ \ \cos b & \sin b & 0 \\
	-\sin b & \cos b & 0 \\
	\end{bmatrix}
	\end{array} \]
where the first two columns of the images give the linear part and the third column, the translation part. The corresponding developing maps are defined by 
\[ \begin{array}{lclclc} D_1(x,y)=(x,y),&D_2(x,y)=\left(x+\frac{y^2}{2},y\right),& D_3(x,y)=(x,e^y),\\ D_4(x,y)=(e^x,ye^x),& D_5(x,y)=(e^x,e^y),&  D_6(x,y)=(e^x\cos y,e^x\sin y).\end{array} \]
Hence the orbit determined by the action corresponding to the representations $\rho_1$  and $\rho_2$ is the whole plane, the orbit determined by $\rho_3$ and $\rho_4$ is the upper half plane $\{(x,y)\mid y>0\}$, the representation $\rho_5$ leaves the first cuadrant $\{ (x,y)\mid x,y>0 \}$ as the open orbit and for the action $\rho_6$, the orbit is the punctured plane $\{ (x,y)\mid (x,y)\ne(0,0) \}$. As the developing maps $D_1$ to $D_5$ are diffeomorphisms onto the respective orbit, by denoting by $\nabla^i$, $i=1,\dots,6$ the  left invariant connection on $\mathbb{R}^2$ determined by $\rho_i$, from  Lemma \ref{T:preservarorbitaigualapreservarfrontera} and  Theorem \ref{T:mainresult},  we get
\begin{align*}
\it{Aff}(\mathbb{R}^2,\nabla^1)_0&=\it{Aff}(\mathbb{R}^2,\nabla^2)_0\cong \it{Aff}(\mathbb{R}^2,\nabla^0)\\
\it{Aff}(\mathbb{R}^2,\nabla^3)_0&=\it{Aff}(\mathbb{R}^2,\nabla^4)_0\cong\left\{\left[ \begin{matrix} a&b&c\\0&d&0 \end{matrix}\right]\bigg|\ a\ne0,\ d>0\right\}\\
\it{Aff}(\mathbb{R}^2,\nabla^5)_0&\cong\left\{\left[ \begin{matrix} a&0\\0&b \end{matrix}\right]\bigg|\ a,b>0\right\}\cong (\mathbb{R}^{>0})^2\end{align*}
Although Theorem \ref{T:mainresult} does not apply in the last case, it is  known that $\it{Aff}(\mathbb{R}^2,\nabla^6)_0\cong GL_2(\mathbb{R})^+$ (see \cite{Nag}), where the $+$ is used to denote linear transformations preserving orientation.
\end{example}

\begin{example} Consider the group $G=\it{Aff}(\mathbb{R})_0$ given by classical affine orientation preserving transformations of $\mathbb{R}$, known as the group of  affine motions of the line (orientation preserving). This group is isomorphic to $\mathbb{R}^{>0}\times \mathbb{R}$ with the product $(a,b)(c,d)=(ac,ad+b)$. Its Lie algebra is isomorphic to $\mathfrak{g}=$Lie$(G)\cong\mathbb{R}e_1\oplus\mathbb{R}e_2$ with the bracket $[e_1,e_2]=e_2$. Up to isomorphism, there are two families and four exceptional  left symmetric products compatible with the bracket on $\mathfrak{g}$ (see \cite{MSG})
\begin{center}
 \begin{tabular}{c|cc} $\mathcal{F}_1(\alpha)$ &$e_1$&$e_2$ \\\hline $e_1$&$\alpha e_1$ & $e_2$ \\ $e_2$&$0$&$0$ 
\end{tabular}$\qquad$ \begin{tabular}{c|cc} $\mathcal{F}_2(\alpha)$ $(\alpha\ne0)$ &$e_1$&$e_2$ \\\hline $e_1$&$\alpha e_1$ & $(\alpha+1)e_2$ \\ $e_2$& $\alpha e_2$&$0$ 
\end{tabular}\\   \begin{tabular}{c|cc} $\mathcal{E}_1$ &$e_1$&$e_2$ \\\hline $e_1$&$ e_1+e_2$ & $e_2$ \\ $e_2$&$0$&$0$ 
\end{tabular}$\qquad$ \begin{tabular}{c|cc}$\mathcal{E}_2$ &$e_1$&$e_2$ \\\hline $e_1$&$ -e_1+e_2$ & $0$ \\ $e_2$&$-e_2$&$0$ \end{tabular}$\qquad$ \begin{tabular}{c|cc} $\mathcal{E}_{3}$ and $\mathcal{E}_{4}$ &$e_1$&$e_2$ \\\hline $e_1$&$ 2e_1$ & $e_2$ \\ $e_2$&$0$&$\pm e_1$ \end{tabular}
\end{center}
where $\alpha$ is a real parameter. Denoting by $F_{i,(\alpha)}$ and $E_j$ the corresponding developing maps determined by the products $\mathcal{F}_i(\alpha)$ and $\mathcal{E}_j$, respectively, for $i=1,2$ and $j=1,\dots,4$ one can check that
\[ \begin{array}{lcclc} F_{1,(\alpha)}(x,y)=\left(\frac{1}{\alpha}x^\alpha,y\right)\text{ for }  \alpha\ne0
&\ & F_{1,(0)}(x,y)=(\ln x,y)\\ &\ & \\
F_{2,(\alpha)}(x,y)=\left(\frac{1}{\alpha}x^\alpha,x^\alpha y\right)\text{ for }  \alpha\ne\{0,-1\}&\ & F_{2,(-1)}(x,y)=\left(-\frac{1}{x},\frac{y}{x}\right)\\ &\ & \\ 
E_1(x,y)=(x,1+x+y+x\ln x)&\ & E_2(x,y)=\left(-\frac{1}{x},\frac{1}{x}+\frac{y}{x}+\ln x-1\right)\\  &\ & \\
E_3(x,y)=\left(\frac{1}{2}(x^2+y^2-1),y\right)&\ & E_4(x,y)=\left(\frac{1}{2}(x^2-y^2-1),y\right)
\end{array} 
  \]
It is easy to check that the image of $F_{1,(0)}$ is the whole plane, the image under the maps  $F_{1,(\alpha)}$, $F_{2,(\alpha)},$ $E_1,$ and $E_2$ is the right half plane $\0=\{ (x,y)\mid x>0\}$, and the image under the maps $E_3$ and $E_4$ is the interior of the parabola $x=\frac{1}{2}(y^2-1)$. It is also easy to see that these developing maps are diffeomorphisms, hence by applying Lemma \ref{T:preservarorbitaigualapreservarfrontera},  Theorem \ref{T:mainresult},  and using Example \ref{Ex:hyperplanes} we get 
\begin{align*}
\it{Aff}(G,\nabla_1^+(0))&\cong \it{Aff}(\mathbb{R}^2)\\
\it{Aff}(G,\nabla_i^+(\alpha))&\cong\it{Aff}(G,\nabla_i^+)\cong \left\{\left[\begin{matrix}a&0&0\\b&c&d
\end{matrix}\right]\bigg|\ a>0\text{ and }c\ne0\right\}, \ i=1,2\text{ and }\alpha\ne0\\
\it{Aff}(G,\nabla_3^+)&\cong  \left\{\left[\begin{matrix}a^2& ab&(a^2+ b^2-1)/2\\0&a&b
\end{matrix}\right]\bigg|\ a>0\right\} \\
\it{Aff}(G,\nabla_4^+)&\cong  \left\{\left[\begin{matrix}a^2&-ab&(a^2- b^2-1)/2\\0&a&b
\end{matrix}\right]\bigg|\ a>0\right\}
\end{align*}
where  $\nabla_i^+(\alpha)$ and $\nabla_j^+$ denote the corresponding left invariant flat affine connections on $G$ determined by the products $\mathcal{F}_i(\alpha)$ and $\mathcal{E}_j$, respectively, for $i=1,2$ and $j=1,\dots,4$,
\end{example}

We finish this work with the following.

\begin{corollary} If $(M,\nabla)$ is a flat affine simply connected manifold so that the  developing map is an isomorphism onto  	$\0_i=\{(x_1,\dots,x_n)\mid x_1>0,\dots,x_i>0\}$ or $\0_i$ equal to the space $\mathbb{R}^n$ with $i$ holes, $0\leq i\leq n$, then the group $\it{Aff}(M,\nabla)$ admits a flat affine bi-invariant connection.
\end{corollary}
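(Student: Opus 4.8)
The plan is to combine Theorem~\ref{T:mainresult} with Remark~\ref{R:planewithiholes} and Lemma~\ref{l:orbitbyhyperplanes}. Since the developing map $D\colon M\to\0_i$ is an affine diffeomorphism, Theorem~\ref{T:mainresult} identifies $\it{Aff}(M,\nabla)$ (at least locally, hence at the level of Lie algebras) with $\{T\in\it{Aff}(\R^n)\mid T(\0_i)=\0_i\}$, so that $\it{aff}(M,\nabla)$ is isomorphic as a Lie algebra to $\it{aff}(\0_i,\nabla^i)$. Thus it suffices to exhibit, on this last Lie algebra, a flat affine bi-invariant connection, or equivalently (by the discussion in the Introduction) an associative product compatible with the Lie bracket. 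But both Remark~\ref{R:planewithiholes} and Lemma~\ref{l:orbitbyhyperplanes} already record precisely this: in the ``$i$ holes'' case $\it{aff}(M,\nabla)$ is the matrix algebra $\left\{\left[\begin{smallmatrix}A&B\\0&0\end{smallmatrix}\right]\right\}$ with the usual matrix product, and in the ``intersection of half-spaces'' case it is the matrix algebra $\left\{\left[\begin{smallmatrix}D&0&0\\A&B&w\\0&0&0\end{smallmatrix}\right]\right\}$ with $D$ diagonal, again associative under matrix multiplication.

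The steps, in order, are: first invoke the hypothesis that $D$ is a diffeomorphism onto $\0_i$ so that the ``more general'' clause of Theorem~\ref{T:mainresult} applies and gives a local isomorphism $\it{Aff}(M,\nabla)\cong\{T\in\it{Aff}(\R^n)\mid T(\0_i)=\0_i\}$; second, pass to Lie algebras to get $\it{aff}(M,\nabla)\cong\it{aff}(\0_i,\nabla^i)$; third, quote the explicit matrix description of $\it{aff}(\0_i,\nabla^i)$ from Remark~\ref{R:planewithiholes} (holes case) or Lemma~\ref{l:orbitbyhyperplanes} (half-spaces case), noting it is closed under matrix multiplication and that this product is compatible with the commutator bracket; fourth, recall from the Introduction that an associative product on $\mathfrak{g}$ compatible with the bracket corresponds exactly to a flat affine bi-invariant connection $\nabla^+$ on any Lie group integrating $\mathfrak{g}$, in particular on $M$ once we fix a compatible Lie group structure — or simply state the conclusion at the level of the connection via the correspondence $\nabla^+_{X^+}Y^+=(X\cdot Y)^+$.

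One subtlety worth a sentence: $M$ is only assumed to be a flat affine simply connected manifold, not a priori a Lie group, so ``admits a flat affine bi-invariant connection'' should be read as: the flat affine manifold $(M,\nabla)$ is affinely isomorphic (via $D$) to an open orbit of an \'etale affine action, and the relevant transformation Lie algebra carries an associative structure; equivalently, $M$ can be given a Lie group structure for which some left invariant connection is flat affine and bi-invariant. I would phrase the corollary's proof so that this identification is explicit rather than leaving the reader to worry about which group structure is meant.

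I do not expect a genuine obstacle here — the corollary is essentially a bookkeeping consequence of the two earlier computations, the only real content being the already-established fact (Introduction) that bi-invariance of $\nabla^+$ is equivalent to associativity of the product on $\mathfrak{g}$. The one place to be careful is to make sure the isomorphism from Theorem~\ref{T:mainresult} is used at the Lie algebra level (so that ``locally isomorphic'' is enough) and that the two matrix algebras cited really are the Lie algebras of the groups in question, which is exactly what Remark~\ref{R:planewithiholes} and Lemma~\ref{l:orbitbyhyperplanes} assert.
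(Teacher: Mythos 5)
Your main chain of reasoning --- invoke the ``more general'' clause of Theorem~\ref{T:mainresult} to get a local isomorphism between $\it{Aff}(M,\nabla)$ and $\it{Aff}(\0_i,\nabla^i)$, pass to Lie algebras, and read off from Remark~\ref{R:planewithiholes} or Lemma~\ref{l:orbitbyhyperplanes} that $\it{aff}(\0_i,\nabla^i)$ is an associative matrix algebra compatible with the commutator bracket --- is exactly the paper's proof. The problem is your fourth step and the ``subtlety'' paragraph: you place the resulting bi-invariant connection on $M$, after equipping $M$ with a Lie group structure. That is a misreading of the statement. The corollary asserts that \emph{the group} $\it{Aff}(M,\nabla)$ --- which is already a Lie group, by the Kobayashi--Nomizu result quoted in the introduction, so no auxiliary group structure has to be chosen --- admits a flat affine bi-invariant connection. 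The associative product lives on $\it{aff}(M,\nabla)\cong\it{aff}(\0_i,\nabla^i)$, the Lie algebra of that transformation group, and the correspondence from \cite{Med} recalled in the introduction then produces a flat affine bi-invariant connection on $\it{Aff}(M,\nabla)$ itself.

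Your proposed reading cannot be repaired: $M$ has dimension $n$, while the associative algebra you exhibit has dimension strictly larger than $n$ in general (for $i=0$ it is all of $\it{aff}(\mathbb{R}^n,\nabla^0)$, of dimension $n^2+n$), so $M$ is not a Lie group integrating that algebra, and the sentence ``$M$ can be given a Lie group structure for which some left invariant connection is flat affine and bi-invariant'' is not what is being claimed and does not follow from your argument. Once the connection is placed on the correct group, your proof coincides with the paper's; everything before the fourth step can stand as written. (As an aside, the paper's own proof contains the slip ``a flat affine bi-invariant connection on $\0_i$'' where it clearly means ``on $\it{Aff}(\0_i,\nabla^i)$''; the statement of the corollary, however, is unambiguous about which group carries the connection.)
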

\begin{proof} From Remark \ref{R:planewithiholes} and Lemma \ref{l:orbitbyhyperplanes} the Lie algebra $\it{aff}(\0_i,\nabla^i)$ of the group $\it{Aff}(\0_i,\nabla^i)$   admits an associative product compatible with the Lie bracket. Hence, it determines a flat affine bi-invariant connection on $\0_i$ (see \cite{Med}). Since  Theorem \ref{T:mainresult} implies that the group $\it{Aff}(M,\nabla)$ is locally isomorphic to $\it{Aff}(\0_i,\nabla^i)$, we get that $\it{Aff}(M,\nabla)$ admits a flat affine bi-invariant connection. 
\end{proof}

\begin{example}
 If $\nabla$ is any of the connections $\nabla^1$ to $\nabla^5$ on $\mathbb{R}^2$ given in Example \ref{Ex:connectionsontheplane}, the group of affine transformations $\it{Aff}(\mathbb{R}^2,\nabla^i)$ admits a flat affine bi-invariant connection. The group $\it{Aff}(\mathbb{R}^2,\nabla^6)$ also admits a flat bi-invariant connection as it is locally isomorphic to $GL_2(\mathbb{R}^2)^+$ and its Lie algebra, $gl_2(\mathbb{R}^2)$, admits an associative product compatible with the Lie bracket. The corollary also applies to the group $\it{Aff}(\it{Aff}(\mathbb{R})_0,\nabla^+)$ of affine transformations of $G=\it{Aff}(\mathbb{R})_0$ preserving any left invariant connection $\nabla^+$ on $G$.
\end{example}

\noindent{\bf Conflict of interest.} On behalf of all authors, the corresponding author states that there is no conflict of interest.

\end{document}